\documentclass{my-aims}

\usepackage{amsmath}
\usepackage{paralist}
\usepackage[colorlinks=true]{hyperref}


\hypersetup{urlcolor=blue, citecolor=red}


\textheight=8.2 true in
\textwidth=5.0 true in
\topmargin 30pt
\setcounter{page}{1}
  

  
\newtheorem{theorem}{Theorem}[section]
\newtheorem{corollary}{Corollary}

\theoremstyle{definition}
\newtheorem{definition}[theorem]{Definition}
\newtheorem{remark}{Remark}

\newtheorem*{problem}{Problem $\mathbf{(H_\tau^n)}$}


\title[Noether currents for problems of Herglotz]{%
Noether currents for higher-order variational problems of Herglotz type with time delay}

\author[S. P. S. Santos, N. Martins and D. F. M. Torres]{Sim\~{a}o P. S. Santos,
Nat\'{a}lia Martins and Delfim F. M. Torres}

\address[Sim\~{a}o P. S. Santos, Nat\'{a}lia Martins and Delfim F. M. Torres]{Center
for Research and Development in Mathematics and Applications (CIDMA),
Department of Mathematics, University of Aveiro, 3810-193 Aveiro, Portugal}
\email{{\tt spsantos@ua.pt, natalia@ua.pt, delfim@ua.pt}}

\subjclass{Primary: 49K15, 49S05; Secondary: 49K05, 34H05.}

\keywords{Herglotz's variational problems, higher-order problems,
optimal control, retarded systems, Euler--Lagrange equations, invariance,  
Noether's theorems, currents.}


\begin{document}


\begin{abstract}
We study, from an optimal control perspective, 
Noether currents for higher-order problems 
of Herglotz type with time delay.
Main result provides new Noether currents for such 
generalized variational problems, 
which are particularly useful in the search of extremals. 
The proof is based on the idea of rewriting the higher-order 
delayed generalized variational problem as a first-order 
optimal control problem without time delays.
\end{abstract}

\maketitle


\section{Introduction}

This article is devoted to the proof of a second Noether 
type theorem for higher-order delayed variational problems of Herglotz.
Such problems, which are invariant under a certain group of transformations,
were first studied in 1918 by Emmy Noether  
for the particular case of first-order variational 
problems without time delay \cite{Noether1918}. 
In her famous paper \cite{Noether1918}, Noether proved 
two remarkable theorems that relate the invariance 
of a variational integral with properties 
of its Euler--Lagrange equations. Since most
physical systems can be described by using Lagrangians 
and their associated actions, the importance 
of Noether's two theorems is obvious \cite{MR3540628}.

The first Noether's theorem, usually simply called Noether's theorem,
ensures the existence of $r$ conserved quantities along 
the Euler--Lagrange extremals when the variational integral 
is invariant with respect to a continuous symmetry 
transformation that depend on $r$ parameters \cite{cpaa}.
Noether's theorem explains all conservation laws of mechanics,  
for instance, invariance under translation in time 
implies conservation of energy; 
conservation of linear momentum comes from invariance of the system 
under spacial translations; invariance under rotations 
in the base space yields conservation of angular momentum.

The second Noether's theorem, less known than the first one,
applies to variational problems that are invariant under 
a certain group of transformations that depends on arbitrary 
functions and their derivatives up to some order \cite{Torres2003MR1980565}. 
In contrast to Noether's theorem, where the transformations are global,
in second Noether's theorem the transformations are local: 
they can affect every part of the system differently.
Noether's second theorem has  applications in several fields, 
such as, general relativity, hydromechanics, electrodynamics,
and quantum chromodynamics \cite{MR3413358,MR2761345,MR3467590}. 
Extensions of both Noether's theorems to optimal control problems were 
first obtained in \cite{ejc,Torres:ConservLaws2002,Torres2003MR1980565,Torres2004}. 
For systems with time delay, see \cite{MR2970905}.
In 2013, the second Noether theorem was extended to the context of 
fractional calculus \cite{MR3179312} and time scales \cite{malinaNaty}.

Motivated by the important applications of Noether's second theorem \cite{malinaNaty}
and the applicability of higher-order dynamic systems with time delays
in modeling real-life phenomena \cite{MyID:304,MyID:253,MyID:353}, as well as the 
importance of variational problems of Herglotz \cite{Guenther1996,Herglotz1930},  
our goal in this paper is to study generalized variational problems 
that are invariant under a certain group of transformations 
that depends on arbitrary functions and their derivatives up to some order, 
and deduce expressions for Noether currents, that is, 
expressions that are constant in time along the extremals.

Our work is related with the second Noether theorem for optimal control
in the sense of \cite{Torres2003MR1980565}, and is particularly useful 
because provides necessary conditions for the search of extremals.
There are other different results on the calculus of variations, 
also related with the notion of invariance under a certain group 
of transformations that depends on arbitrary functions 
and their derivatives, but they are concerned with Noether identities 
\cite{Georgieva2005,malinaNaty,Agnieszka+Tatiana2016} 
and not with Noether currents as we do here.

The generalized variational problem was introduced 
by Herglotz in 1930 \cite{Herglotz1930},
and consists in the determination of
$x\in C^1([a,b];\mathbb{R}^m)$
and $z\in C^1([a,b];\mathbb{R})$,
such that
\begin{equation}
\label{PH}
\tag{$H^1$}
\begin{gathered}
z(b)\longrightarrow \mathrm{extr},\\
\dot{z}(t)=L(t,x(t),\dot{x}(t),z(t)), \quad t \in [a,b],\\
\text{subject to } x(a)=\alpha \quad \text{and} \quad z(a)=\gamma
\end{gathered}
\end{equation}
for some $\alpha \in \mathbb{R}^m$ and $\gamma \in \mathbb{R}$,
where by ``$\mathrm{extr}$'' we mean ``to minimize or maximize'' and
the Lagrangian $L \in C^1([a,b]\times\mathbb{R}^{2m+1}; \mathbb{R})$ 
is such that $\displaystyle t \mapsto \frac{\partial L}{\partial x}
\left(t,x(t), \dot{x}(t), z(t)\right)$, $\displaystyle t
\mapsto \frac{\partial L}{\partial \dot{x}}\left(t,
x(t), \dot{x}(t), z(t)\right)$  and $\displaystyle t \mapsto
\frac{\partial L}{\partial z}\left(t, x(t), \dot{x}(t), z(t)\right)$
are differentiable. It is clear that if the Lagrangian 
$L$ does not depend on the variable $z$,
then we get the classical problem of the calculus of variations.
The variational problem of Herglotz attracted the interest of the mathematical
community in the last two decades, after the publications 
\cite{Guenther1996SIAM,Guenther1996}. Namely, the two Noether theorems were proved 
for the first-order problem in \cite{Georgieva2002,Georgieva2005,Georgieva2003}. 
The first Noether theorem for variational problems of Herglotz type with time delay 
was proved in \cite{MyArt02}. The higher-order problem of Herglotz was introduzed in \cite{MyArt01}.
Noether's first theorem for higher-order problems was proved in \cite{MyArt04} and, 
more recently, using an optimal control approach, the authors generalized previous 
results for higher-order problems with time delay in \cite{MyArt05}.
The variational problem of Herglotz was also considered in the context 
of fractional calculus in \cite{Almeida+Malinowska2014} and, in the general 
context of Riemannian manifolds, in \cite{Ligia+Luis+Natalia2016}.

The manuscript is organized as follows. In Section~\ref{sec:prelim}, we present 
the results that constitute the basis of our work: a version of 
Pontryagin's maximum principle, higher-order delayed Euler--Lagrange equations
and Noether's second theorem for optimal control problems. In Section \ref{sec:MainRes}, 
we prove our main results: a second Noether theorem for higher--order problems 
of Herglotz with time delay (Theorem~\ref{thm 2_Noether}) 
and two important corollaries: the first (Corollary~\ref{coroll:n=1}) 
is devoted to first-order variational  problems of Herglotz 
with time delay, while the second (Corollary~\ref{coroll:2}) 
is devoted to first-order classical variational problems with time delay.
We finish the paper with an illustrative example (Section~\ref{sec:ex})
and concluding remarks (Section~\ref{sec:conc}).


\section{Preliminaries}
\label{sec:prelim}

In this paper we consider the following generalized 
variational problem $\mathbf{(H_\tau^n)}$.
\begin{problem}
\textit{Let $\tau$ be a real number such that $0\leq \tau<b-a$.
Determine piecewise trajectories $x\in PC^n([a-\tau,b];\mathbb{R}^m)$
and a function $z\in PC^1([a,b];\mathbb{R})$ such that:}
\begin{equation*}
z(b)\longrightarrow \mathrm{extr},\\
\end{equation*}
\textit{where the pair} $(x(\cdot),z(\cdot))$
\textit{satisfies the differential equation}
\begin{equation*}
\dot{z}(t)=L\left(t,x(t),\dot{x}(t),\dots, x^{(n)}(t),
x(t-\tau),\dot{x}(t-\tau),\dots, x^{(n)}(t-\tau),z(t)\right),
\end{equation*}
\textit{for} $t \in [a,b]$, \textit{and is subject to initial conditions}
\begin{equation*}
z(a)=\gamma \in \mathbb{R} \quad  \text{and} \quad 
x^{(k)}(t)=\mu^{(k)}(t), \quad k=0,\dots,n-1,
\end{equation*}
\textit{where} $\mu \in PC^n ([a-\tau,a];\mathbb{R}^m)$
\textit{ is a given initial function. The Lagrangian
$L$ is assumed to satisfy the following hypotheses:}
\begin{itemize}
\item[i.] $L \in C^1([a,b]\times\mathbb{R}^{2m(n+1)}; \mathbb{R})$;

\item[ii.] \textit{functions} $t \mapsto
\frac{\partial L}{\partial z}[x;z]_\tau^n(t)$,
$t\mapsto \frac{\partial L}{\partial x^{(k)}}[x;z]_\tau^n(t)$
\textit{and} $t\mapsto \frac{\partial L}{\partial x_\tau^{(k)}}[x;z]_\tau^n(t)$
\textit{are differentiable for any admissible pair}
$(x(\cdot),z(\cdot))$, $k=0,\dots,n$,
\end{itemize}
\textit{where, to simplify expressions, we use the notation
$x_\tau^{(k)}(t)$ to denote the $k$th derivative
of $x$ evaluated at $t-\tau$ (often we use
$x_\tau(t)$ for $x_\tau^{(0)}(t) = x(t-\tau)$ and
$\dot{x}_\tau(t)$ for $x_\tau^{(1)}(t) = \dot{x}(t-\tau)$) and
$$
[x;z]^n_\tau(t):=\left(t,x(t),\dot{x}(t),
\dots, x^{(n)}(t), x_\tau(t),\dot{x}_\tau(t),\dots, x_\tau^{(n)}(t),z(t)\right).
$$}
\end{problem}

Associated with the generalized variational problem $\mathbf{(H_\tau^n)}$,
one has the following definitions.

\begin{definition}[Admissible pair to problem $\mathbf{(H_\tau^n)}$]
We say that $(x(\cdot),z(\cdot))$ with $x(\cdot) \in PC^n([a-\tau,b];\mathbb{R}^m)$
and $z(\cdot) \in PC^1([a,b];\mathbb{R})$ is an admissible pair to problem
$\mathbf{(H_\tau^n)}$ if it satisfies the equation
$$
\dot{z}(t)=L[x;z]^n_\tau(t), \quad t \in [a,b],
$$
subject to
$$
z(a)=\gamma \quad \mbox{and} \quad x^{(k)}(t)=\mu^{(k)}(t)
$$
for all $k=0,1,\dots,n-1$, $t\in [a-\tau,a]$
and $\gamma \in \mathbb{R}$.
\end{definition}

\begin{definition}[Extremizer to problem $\mathbf{(H_\tau^n)}$]
An admissible pair $(x^*(\cdot),z^*(\cdot))$ is said to be an extremizer
to problem $\mathbf{(H_\tau^n)}$ if $z(b)-z^*(b)$ has the same signal for all
admissible pairs $(x(\cdot),z(\cdot))$ that satisfy
$\|z-z^* \|_0< \epsilon$ and $\|x-x^* \|_0< \epsilon$
for some positive real $\epsilon$, where
$\|y\|_0=\smash{\displaystyle\max_{a\leq t \leq b}}|y(t)|$.
\end{definition}

Inspired by the ideias presented in \cite{Guinn}
(see also \cite{MR3177826,MR3124697,MR3531794,MyArt05}), 
problem $\mathbf{(H_\tau^n)}$ can be rewritten as a first-order optimal 
control problem without time delay. Such reduction is presented in 
Section~\ref{sec:MainRes}. Firstly, let us recall some key notions 
and results from optimal control theory.
Consider the optimal control problem in Bolza form  
on the interval $[a,b]$:
\begin{equation}
\label{problem P}
\tag{$P$}
\begin{gathered}
\mathcal{J}(x(\cdot),u(\cdot))=\int_a^b f(t,x(t),u(t))dt
+\phi(x(b))\longrightarrow \mathrm{extr}\\
\text{subject to } \dot{x}(t)=\varphi(t,x(t),u(t)),
\end{gathered}
\end{equation}
with some initial condition on $x$,
where $f \in C^1([a,b]\times \mathbb{R}^{m}\times \Omega;\mathbb{R})$,
$\phi \in C^1(\mathbb{R}^{m};\mathbb{R})$,
$\varphi \in C^1([a,b]\times \mathbb{R}^{m}\times \Omega;\mathbb{R}^m)$,
$x \in PC^1([a,b]; \mathbb{R}^m)$ and $u\in PC([a,b];\Omega)$,
with $\Omega \subseteq \mathbb{R}^r$ an open set.
Function $x$ is called the state variable and $u$ the control variable; 
$\phi$ is known as the payoff term. 

A fundamental tool in optimal control theory 
is the well-known Ponytryagin's maximum principle.

\begin{theorem}[Pontryagin's maximum principle
for problem \eqref{problem P} \cite{Pontryagin}]
\label{PMP}
If a pair $(x(\cdot),u(\cdot))$ with
$x \in PC^1([a,b]; \mathbb{R}^m)$ and $u\in PC([a,b];\Omega)$
is a solution to problem \eqref{problem P} with the initial condition $x(a)=\alpha$,
$\alpha \in \mathbb{R}^m$, then there exists a multiplier $\psi \in PC^1([a,b];\mathbb{R}^m)$
such that for the Hamiltonian $H$ defined by
\begin{equation}
\label{eq:def:Hamiltonian}
H(t,x,u,\psi):=f(t,x,u)+\psi\cdot \varphi(t,x,u)
\end{equation}
the next conditions hold:
\begin{itemize}
\item the optimality condition
\begin{equation}
\label{prob P opt condt}
\frac{\partial H}{\partial u}(t, x(t),u(t), \psi(t))=0;
\end{equation}
		
\item the adjoint system
\begin{equation}
\label{prob P adj syst}
\begin{cases}
\dot{x}(t)=\frac{\partial H}{\partial \psi}(t, x(t),u(t), \psi(t))\\
\dot{\psi}(t)=-\frac{\partial H}{\partial x}(t, x(t),u(t), \psi(t));
\end{cases}
\end{equation}
		
\item the transversality condition
\begin{equation}
\label{prob P tr condt}
\psi(b)=grad(\phi(x))(b).
\end{equation}
\end{itemize}
\end{theorem}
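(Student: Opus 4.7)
The plan is to prove the theorem by Pontryagin's classical needle-variation (spike-variation) technique, exploiting two simplifications of the hypotheses: the control set $\Omega$ is open (so extremality of the Hamiltonian in $u$ reduces to the stationarity condition $\partial H/\partial u = 0$ rather than a global maximization over $\Omega$), and only the initial condition $x(a) = \alpha$ is prescribed (so no abnormal multiplier appears and the transversality at $t = b$ is dictated solely by the payoff $\phi$).

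First, I would fix a point of continuity $\tau \in (a,b)$ of the candidate extremal control $u^*$, pick an arbitrary $v \in \Omega$, and introduce the needle variation
\begin{equation*}
u_\epsilon(t) = \begin{cases} v, & t \in [\tau, \tau + \epsilon], \\ u^*(t), & \text{otherwise,} \end{cases}
\end{equation*}
for small $\epsilon > 0$. Standard continuous-dependence estimates for ODEs produce a unique trajectory $x_\epsilon$ of $\dot x = \varphi(t, x, u_\epsilon)$ with $x_\epsilon(a) = \alpha$, and yield the expansion $x_\epsilon(t) = x^*(t) + \epsilon\, y(t) + o(\epsilon)$ on $[\tau, b]$, where $y$ satisfies the linear variational equation
\begin{equation*}
\dot{y}(t) = \frac{\partial \varphi}{\partial x}\bigl(t, x^*(t), u^*(t)\bigr)\, y(t), \qquad t > \tau,
\end{equation*}
with the jump $y(\tau^+) = \varphi(\tau, x^*(\tau), v) - \varphi(\tau, x^*(\tau), u^*(\tau))$. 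I would then define $\psi$ as the solution of the backward linear equation $\dot\psi(t) = -\partial H/\partial x$ evaluated along $(x^*, u^*, \psi)$, with terminal condition $\psi(b) = \nabla\phi(x^*(b))$ built in by fiat. A direct differentiation of $\psi(t) \cdot y(t)$ cancels the $\varphi_x$ terms and leaves derivative $-f_x(t,x^*,u^*)\cdot y(t)$; integrating from $\tau$ to $b$ and combining with the direct contribution of order $\epsilon\bigl[f(\tau, x^*(\tau), v) - f(\tau, x^*(\tau), u^*(\tau))\bigr]$ coming from the integrand on $[\tau, \tau+\epsilon]$ gives the compact first-order expansion
\begin{equation*}
\mathcal{J}(x_\epsilon, u_\epsilon) - \mathcal{J}(x^*, u^*) = \epsilon\bigl[H(\tau, x^*(\tau), v, \psi(\tau)) - H(\tau, x^*(\tau), u^*(\tau), \psi(\tau))\bigr] + o(\epsilon).
\end{equation*}

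To conclude, I would invoke the extremality of $(x^*, u^*)$: since the left-hand side has constant sign for all admissible spike variations and $\Omega$ is open, the choice $v = u^*(\tau) + \eta w$ with arbitrary $w \in \mathbb{R}^r$ and $\eta \to 0$ forces the first variation in $v$ of $H(\tau, x^*(\tau), \cdot, \psi(\tau))$ to vanish, establishing \eqref{prob P opt condt}. The adjoint system \eqref{prob P adj syst} then holds by construction, its second line by the definition of $\psi$ and its first line as a tautological rewriting of $\dot x = \varphi = \partial H/\partial \psi$; the transversality condition \eqref{prob P tr condt} is likewise built into the definition of $\psi$ at $t = b$. The main obstacle, and the reason Pontryagin's principle is invoked rather than reproved in this paper, is the rigorous justification of the uniform $o(\epsilon)$ bound in the needle-variation expansion and the careful handling of the jump of $y$ at $\tau$ for piecewise-continuous controls; the classical reference \cite{Pontryagin} carries out these estimates in full detail.
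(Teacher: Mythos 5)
The paper does not prove Theorem~\ref{PMP} at all: it is quoted as a classical result and attributed to the reference \cite{Pontryagin}, exactly as you anticipate in your closing sentence. So your proposal is not ``the same route'' or ``a different route'' so much as an attempt to supply a proof the authors deliberately omit. As an outline it is essentially sound: the needle variation at a continuity point of $u^*$, the variational equation for $y$ with the jump $y(\tau^+)=\varphi(\tau,x^*(\tau),v)-\varphi(\tau,x^*(\tau),u^*(\tau))$, the backward definition of $\psi$ with $\psi(b)=\nabla\phi(x^*(b))$, and the cancellation in $\frac{d}{dt}\left(\psi\cdot y\right)$ leading to $\Delta\mathcal{J}=\epsilon\left[H(\tau,x^*(\tau),v,\psi(\tau))-H(\tau,x^*(\tau),u^*(\tau),\psi(\tau))\right]+o(\epsilon)$ are all correct, and the passage from constant sign of this increment to $\partial H/\partial u=0$ does use the openness of $\Omega$ correctly. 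Two remarks. First, since the theorem as stated only asserts the stationarity condition \eqref{prob P opt condt} rather than the full pointwise maximization of $H$ over $\Omega$, needle variations are heavier machinery than needed: a weak variation $u_\epsilon=u^*+\epsilon h$ with $h$ piecewise continuous, together with the same adjoint $\psi$, yields $\delta\mathcal{J}=\int_a^b \frac{\partial H}{\partial u}\cdot h\,dt$ and hence \eqref{prob P opt condt} by the fundamental lemma, with no spike asymptotics or jump analysis at all; this is the more economical argument for the weak form used in this paper. Second, your argument establishes \eqref{prob P opt condt} only at continuity points of $u^*$; for piecewise continuous controls you should add the (easy) remark that the condition extends to the finitely many remaining points by one-sided limits, since $x^*$, $\psi$ are continuous and $H$ is $C^1$. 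With those caveats, and with the hard uniform $o(\epsilon)$ estimates delegated to \cite{Pontryagin} as you acknowledge, your sketch is an acceptable reconstruction of the cited classical proof.
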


The following definition is of central importance 
for the formulation of second Noether's theorem.

\begin{definition}[Noether current \cite{Torres2003MR1980565}]
A function $C(t,x(t),u(t), \psi(t))$, which is constant along 
every $x \in PC^1([a,b];\mathbb{R}^m)$, $u\in PC([a,b];\Omega)$ and
$\psi \in PC^1([a,b];\mathbb{R}^m)$ solution of 
\eqref{prob P opt condt}--\eqref{prob P tr condt},
is called a Noether current.
\end{definition}

In order to apply the results of \cite{Torres2003MR1980565}
to the Bolza problem \eqref{problem P}, we rewrite
it in the following equivalent Lagrange form:
\begin{equation*}
\begin{gathered}
\mathcal{I}(x(\cdot),y(\cdot),u(\cdot))
=\int_a^b \big(f(t,x(t),u(t))+ y(t)\big) dt \longrightarrow \mathrm{extr}\\
\text{subject to }
\begin{cases}
\dot{x}(t)=\varphi\left(t,x(t),u(t)\right),\\
\dot{y}(t)=0,
\end{cases}\\
\text{and to the initial conditions }
x(a)=\alpha \text{ and } y(a)= \frac{\phi(x(b))}{b-a}.
\end{gathered}
\end{equation*}

Using Pontryagin's maximum principle,  
the following result follows \cite{MyArt05}.

\begin{theorem}[Higher-order delayed Euler--Lagrange equations 
and transversality conditions \cite{MyArt05}]
\label{thm:E-L}
If $(x(\cdot),z(\cdot))$ is an extremizer to problem $\mathbf{(H_\tau^n)}$
that satisfies the conditions $x^{(k)}(t)=\mu^{(k)}(t)$, with $\mu \in PC^n ([a-\tau,a];
\mathbb{R}^m)$, $k=0,\dots,n-1$ and $t\in [a-\tau, a]$, 
then the following two Euler--Lagrange equations hold:
\begin{equation*}
\sum_{l=0}^{n}(-1)^l\frac{d^l}{dt^l}\left(\psi_z(t)\frac{\partial L}{\partial
x^{(l)}}[x;z]^n_\tau(t)+\psi_z(t+\tau)\frac{\partial L}{\partial
x_\tau^{(l)}}[x;z]^n_\tau(t+\tau)\right)=0,
\end{equation*}
for $t \in [a,b-\tau]$, and
\begin{equation*}
\sum_{l=0}^{n}(-1)^l\frac{d^l}{dt^l}\left(\psi_z(t)
\frac{\partial L}{\partial x^{(l)}}[x;z]^n_\tau(t)\right)=0,
\end{equation*}
for $t \in [b-\tau,b]$, where $\psi_z$ is defined by
\begin{equation*}
\psi_z(t)=e^{\int_t^b\frac{\partial L}{\partial z}[x;z]^n_\tau(\theta)d\theta},
\quad t \in [a,b].
\end{equation*}
Furthermore, the following transversality conditions are satisfied:
\begin{equation*}
\sum_{l=0}^{n-k}(-1)^l\frac{d^l}{dt^l}\left(\psi_z(t)
\frac{\partial L}{\partial x^{(l+k)}}[x;z]^n_\tau(t)\right)\bigg\vert_{t=b}=0, \quad k=1,\dots,n.
\end{equation*}
\end{theorem}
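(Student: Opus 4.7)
The plan is to prove Theorem~\ref{thm:E-L} by reformulating problem $\mathbf{(H_\tau^n)}$ as a first-order optimal control problem in Bolza form on $[a,b]$ without time delay, and then deducing the Euler--Lagrange equations and transversality conditions as direct consequences of Pontryagin's maximum principle (Theorem~\ref{PMP}).

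First I would remove the higher-order derivatives in the usual way: introduce new state components $x_0(t)=x(t),\,x_1(t)=\dot x(t),\,\ldots,\,x_{n-1}(t)=x^{(n-1)}(t)$ with $\dot x_k=x_{k+1}$ for $k=0,\ldots,n-2$, and use $u(t)=x^{(n)}(t)$ as a control (and analogously for the delayed copies $x_\tau^{(k)}$). The Herglotz dependence on $z$ is kept as an additional state satisfying $\dot z=L[x;z]_\tau^n$, and the payoff in \eqref{problem P} is simply $\phi(z)=z$, so that maximizing or minimizing $z(b)$ is captured by the Bolza functional.

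Second, I would eliminate the time delay following the Guinn-type device used in \cite{MyArt05}. The critical instant is $t=b-\tau$: on $[a,b-\tau]$ both $[x;z]_\tau^n(t)$ and $[x;z]_\tau^n(t+\tau)$ involve free values of $x$, whereas on $[b-\tau,b]$ the shifted argument $t+\tau$ falls outside $[a,b]$ and only $[x;z]_\tau^n(t)$ contributes. Unfolding the delayed state into a second copy on $[a,b-\tau]$, with the prescribed history $\mu$ entering as a datum, produces a genuine non-delayed first-order Bolza problem. Applying Theorem~\ref{PMP} one obtains a Hamiltonian in which $\psi_z$ is the costate of $z$; the adjoint equation for $\psi_z$ reads $\dot\psi_z=-\psi_z\,\partial L/\partial z\,[x;z]_\tau^n$, and the transversality condition \eqref{prob P tr condt} gives $\psi_z(b)=\phi'(z(b))=1$. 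Integrating yields exactly the stated exponential formula for $\psi_z$.

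Third, the optimality condition \eqref{prob P opt condt} and the adjoint system \eqref{prob P adj syst} for the remaining costates $\psi_0,\ldots,\psi_{n-1}$ (direct and delayed blocks) provide a chain of first-order relations. Iterated differentiation and back-substitution collapse this chain into a single $n$th-order equation in which $\psi_z$ multiplies $\partial L/\partial x^{(l)}$. On $[a,b-\tau]$ the ``unfolded'' delayed block contributes, after the change of variable $t\mapsto t+\tau$, the term $\psi_z(t+\tau)\,\partial L/\partial x_\tau^{(l)}[x;z]_\tau^n(t+\tau)$, giving the first Euler--Lagrange equation; on $[b-\tau,b]$ this block is silent and only the direct term $\psi_z(t)\,\partial L/\partial x^{(l)}[x;z]_\tau^n(t)$ survives, producing the second one. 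The transversality conditions
\[
\sum_{l=0}^{n-k}(-1)^l\frac{d^l}{dt^l}\bigl(\psi_z(t)\,\partial L/\partial x^{(l+k)}[x;z]_\tau^n(t)\bigr)\big\vert_{t=b}=0,\qquad k=1,\ldots,n,
\]
then arise from the natural boundary conditions $\psi_k(b)=0$ (since $\phi$ does not depend on $x_k$), translated through the same collapsing procedure.

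The main obstacle I expect is the bookkeeping around the junction $t=b-\tau$: one must keep track of the costates of the direct and unfolded blocks separately, verify the appropriate matching/continuity across $b-\tau$, and confirm that the shift $t\mapsto t+\tau$ acts consistently on $\psi_z$ to yield the clean pair of equations stated in the theorem. Once this junction analysis is in place, the Euler--Lagrange equations and the transversality conditions follow routinely from Theorem~\ref{PMP}.
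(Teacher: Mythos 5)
Your proposal is correct and follows essentially the same route as the paper: Theorem~\ref{thm:E-L} is imported from \cite{MyArt05}, and both there and in the reduction \eqref{reduction}--\eqref{Problem:OC:non:delayed:form} of Section~\ref{sec:MainRes} the argument is exactly what you describe --- a Guinn-type rewriting of $\mathbf{(H_\tau^n)}$ as a non-delayed first-order optimal control problem, then Pontryagin's maximum principle, with the adjoint equation and $\psi_z(b)=1$ integrating to the exponential multiplier \eqref{psi_z}, and the vanishing terminal costates of the $x$-block collapsing into the transversality conditions. The only caveat is that the actual reduction uses the $N=(b-a)/\tau$ time slices $x^{k;i}(t)=x^{(k)}(t+(i-1)\tau)$ on $[0,\tau]$ with matching conditions at the slice endpoints, rather than a single extra delayed copy on $[a,b-\tau]$, but this does not alter the substance of your argument.
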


In addition to previous result, we were able to obtain in \cite{MyArt05} 
expressions for the multipliers related to $z$ and $x$, and also the 
expression of the Hamiltonian of problem $\mathbf{(H_\tau^n)}$. They are, 
respectively:
\begin{equation}
\label{psi_z}
\psi_z(t)=e^{\int_t^b\frac{\partial L}{\partial z}[x;z]^n_\tau(\theta)d\theta},
\quad t \in [a,b],
\end{equation}
\begin{equation}
\label{phi_k}
\begin{split}
&\phi_{k}(t)=\sum_{l=0}^{n-k}(-1)^{l+1}\frac{d^l}{dt^l}\left(\psi_{z}(t+\tau)
\frac{\partial L}{\partial x_\tau^{(l+k)}}[x;z]^n_\tau(t+\tau)\right), \quad t \in [a-\tau,a],\\
&\phi_{k}(t)=\sum_{l=0}^{n-k}(-1)^{l+1}\frac{d^l}{dt^l}\left(\psi_z(t)
\frac{\partial L}{\partial x^{(l+k)}}[x;z]^n_\tau(t)
+\psi_{z}(t+\tau)
\frac{\partial L}{\partial x_\tau^{(l+k)}}[x;z]^n_\tau(t+\tau)\right), 
\end{split}
\end{equation}
$t\in [a,b]$, and
\begin{equation}
\label{Hamilt H-O delay}
H=\sum_{k=1}^n\phi_k(t) \cdot x^{(k)}(t)
+\psi_z(t) L[x;z]^n_\tau(t), \ t\in [a,b].
\end{equation}
Before presenting Noether's second theorem for the optimal control \eqref{problem P}, 
we need to introduced a notion of invariance. In this paper we follow the definition 
of semi-invariance presented in \cite{Torres2003MR1980565}.

\begin{definition}[Semi-invariance of problem \eqref{problem P} 
under a group of symmetries \cite{Torres2003MR1980565}]
\label{DEF inv symm}
Let $p:[a,b]\rightarrow \mathbb{R}^d$ be an arbitrary 
function of class $C^q$. Using the notation
$$
\alpha(t):=\left(t,x(t),u(t),p(t),\dot{p}(t),\dots,p^{(q)}(t)\right),
$$
we say that the optimal control problem \eqref{problem P} 
is semi-invariant if there exist a $C^1$ transformation group
\begin{equation}
\label{eq:trf:g}
\begin{gathered}
g:[a,b]\times\mathbb{R}^m\times \Omega\times\mathbb{R}^{d\times(q+1)}
\rightarrow\mathbb{R}\times\mathbb{R}^m\times \mathbb{R}^r,\\
g(\alpha(t))=\left(\mathsf{T}(\alpha(t)),\mathsf{X}(\alpha(t)),\mathsf{U}(\alpha(t))\right),
\end{gathered}
\end{equation}
which for $p(t)=\dot{p}(t)=\cdots=p^{(q)}(t)=0$ coincides with the identity transformation 
for all $(t,x,u)\in[a,b]\times \mathbb{R}^m\times \Omega$, satisfying the following conditions:
\begin{multline*}
\left(\theta_0\cdot p(t)+\theta_1\cdot \dot{p}(t)
+\cdots+\theta_q\cdot p^{(q)}(t)\right)
\frac{d}{dt}f(t,x(t),u(t))+f(t,x(t),u(t))\\
+\frac{\phi(x(b))}{b-a} + \frac{d}{dt}F(\alpha(t))
=\left(f(g(\alpha(t)))+\frac{\phi(X\left(\alpha(b)\right))}{T(\alpha(b))-T(\alpha(a))}\right)
\frac{d}{dt}\mathsf{T}(\alpha(t)),
\end{multline*}
\begin{equation*}
\frac{d}{dt}\mathsf{X}(\alpha(t))=\varphi\left(g(\alpha(t))\right)\frac{d}{dt}\mathsf{T}(\alpha(t)),
\end{equation*}
for some function $F$ of class $C^1$ and some $\theta_0,\dots, \theta_q \in \mathbb{R}^d$.
\end{definition}

\begin{remark}
The group of transformations $g$ \eqref{eq:trf:g} is usually called 
a gauge symmetry of the optimal control problem, in order to emphasize 
the fact that the transformations depend on arbitrary functions and, 
therefore, have local nature.
\end{remark}

\begin{theorem}[Noether's second theorem for the optimal control 
problem \eqref{problem P} \cite{Torres2003MR1980565}]
\label{thm 2_Noether OC}
If problem \eqref{problem P} is semi-invariant under a group of symmetries 
as in Definition~\ref{DEF inv symm}, then there are 
$d(q+1)$ Noether currents of the form
\begin{multline*}
\left.\frac{\partial F(\alpha(t))}{\partial p_J^{(I)}}\right|_0
+\theta_J^I\left(f(t,x(t),u(t)+\frac{\phi(x(b))}{b-a}\right) \\
+ \psi(t)\cdot \left. \frac{\partial \mathsf{X}(\alpha(t))}{\partial p_J^{(I)}}\right|_0
-\left.H(t,x(t),u(t),\psi(t))\frac{\partial \mathsf{T}(\alpha(t))}{\partial p_J^{(I)}}\right|_0
\end{multline*}
for $I=0,\dots,q,$ $J=1,\dots,d,$ where
$H$ is defined in \eqref{eq:def:Hamiltonian} and $\left.(*)\right|_0$ 
stands for $\left.(*)\right|_{p(t)=\dot{p}(t)=\dots=p^{(q)}(t)=0}$.
\end{theorem}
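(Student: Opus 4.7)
The plan is to peel the single semi-invariance identity of Definition~\ref{DEF inv symm} apart into $d(q+1)$ infinitesimal identities, one for each scalar parameter $p_J^{(I)}$, and then combine each of them with the Pontryagin conditions of Theorem~\ref{PMP} to recognise a total time derivative.

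First, I would move from the Bolza form \eqref{problem P} to the equivalent Lagrange form introduced just above the theorem, where the payoff $\phi(x(b))$ is absorbed into the running cost through the auxiliary variable $y$ satisfying $\dot y = 0$ and $y(a) = \phi(x(b))/(b-a)$. This brings the problem into the exact setting in which the first Noether-type theorem of \cite{Torres2003MR1980565} applies, and it is also what turns the summand $\phi(x(b))/(b-a)$ in the invariance condition into a genuine contribution to the running Lagrangian.

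Next, observe that the invariance identity involves $p$ and its derivatives only through the point values $p(t),\dot p(t),\dots,p^{(q)}(t)$ at the current time. This allows one to treat each component $p_J^{(I)}(t)$ as an independent scalar parameter, differentiate the two relations of Definition~\ref{DEF inv symm} with respect to $p_J^{(I)}$, and evaluate at $p \equiv 0$, where the transformation collapses to the identity ($\mathsf{T}=t$, $\mathsf{X}=x$, $\mathsf{U}=u$, $\tfrac{d}{dt}\mathsf{T}=1$). The first relation then yields a linear identity that ties $\partial F/\partial p_J^{(I)}|_0$, $\theta_J^I$, and the infinitesimal generators $\partial \mathsf{T}/\partial p_J^{(I)}|_0$, $\partial \mathsf{X}/\partial p_J^{(I)}|_0$, $\partial \mathsf{U}/\partial p_J^{(I)}|_0$ to the partial derivatives of $f$; the second yields the analogous relation for $\varphi$. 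Contracting the resulting identity with the adjoint multiplier $\psi$ from Theorem~\ref{PMP}, using the optimality condition \eqref{prob P opt condt} to eliminate the $\partial \mathsf{U}/\partial p_J^{(I)}|_0$ contribution, and using the adjoint equation \eqref{prob P adj syst} to rewrite $\psi\cdot \partial \varphi/\partial x + \partial f/\partial x$ as $-\dot\psi$, the whole expression should rearrange into $\tfrac{d}{dt}(\mathcal{C}_{I,J})=0$, with $\mathcal{C}_{I,J}$ exactly the quantity asserted in the theorem and with $H$ appearing in the coefficient of $\partial \mathsf{T}/\partial p_J^{(I)}|_0$ by the very definition \eqref{eq:def:Hamiltonian}.

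The main obstacle I foresee is the bookkeeping in the differentiation step: the factor $\theta_0\cdot p + \cdots + \theta_q\cdot p^{(q)}$ multiplies $\tfrac{d}{dt}f$ on the left, while the right-hand side carries the factor $\tfrac{d}{dt}\mathsf{T}$, whose own derivative with respect to $p_J^{(I)}$ does not vanish at $p=0$. One has to commute $\partial/\partial p_J^{(I)}$ with $d/dt$, carefully track the way the order-$q$ dependence of the transformation propagates into each infinitesimal generator, and check that the boundary term $\phi(\mathsf{X}(\alpha(b)))/(\mathsf{T}(\alpha(b))-\mathsf{T}(\alpha(a)))$ linearises correctly so as to produce the $\theta_J^I\bigl(f + \phi(x(b))/(b-a)\bigr)$ summand of the stated Noether current.
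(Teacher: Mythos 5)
First, a point of comparison: the paper does not prove Theorem~\ref{thm 2_Noether OC} at all — it is quoted as a preliminary from \cite{Torres2003MR1980565}, and the only step the paper itself takes is the one you also start with, namely rewriting the Bolza problem \eqref{problem P} in the equivalent Lagrange form with the auxiliary state $y$, $\dot y=0$, $y(a)=\phi(x(b))/(b-a)$, so that the cited result applies. So your proposal can only be measured against the cited source's strategy, and in outline it is the natural one: linearize the two identities of Definition~\ref{DEF inv symm} in each $p_J^{(I)}$ at $p\equiv 0$ and combine the resulting relations with the conditions of Theorem~\ref{PMP}.

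The genuine gap is precisely the step you dismiss as bookkeeping: commuting $\partial/\partial p_J^{(I)}$ with $d/dt$ is not a neutral operation. For any quantity $G(\alpha(t))$ depending on the jet $(p(t),\dot p(t),\dots,p^{(q)}(t))$, the total derivative contains the sum $\sum_{K}\partial G/\partial p^{(K)}\cdot p^{(K+1)}$, so at $p\equiv 0$ one has, for $I\geq 1$,
\begin{equation*}
\frac{\partial}{\partial p_J^{(I)}}\Bigl(\frac{d}{dt}G(\alpha(t))\Bigr)\Big|_0
=\frac{d}{dt}\Bigl(\frac{\partial G}{\partial p_J^{(I)}}\Big|_0\Bigr)
+\frac{\partial G}{\partial p_J^{(I-1)}}\Big|_0 .
\end{equation*}
Hence the linearized invariance relations involve not only the generators $\partial\mathsf{T}/\partial p_J^{(I)}|_0$, $\partial\mathsf{X}/\partial p_J^{(I)}|_0$, $\partial F/\partial p_J^{(I)}|_0$ but also those of order $I-1$. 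If you now contract with $\psi$, use \eqref{prob P opt condt} to kill the $\partial\mathsf{U}/\partial p_J^{(I)}|_0$ terms, use \eqref{prob P adj syst} to replace $\partial f/\partial x+\psi\cdot\partial\varphi/\partial x$ by $-\dot\psi$, and use $\frac{d}{dt}H=\frac{\partial H}{\partial t}$ along Pontryagin extremals (a fact you need but never invoke; it requires the optimality condition together with both equations of the adjoint system, not only the two uses you list), the lower-order generators do not cancel: for $I\geq 1$ the computation you outline leaves residual terms indexed by $I-1$, so it does not directly yield $\frac{d}{dt}\mathcal{C}_{I,J}=0$. Disposing of these residual contributions — by a descending argument in $I$, by exploiting the freedom in the arbitrary function $p$ (for instance through the one-parameter families $s\mapsto s\,p$ and a separation argument), or however \cite{Torres2003MR1980565} organizes it — is the actual core of the proof, and it is absent from your sketch; only the case $I=0$ comes out of your computation essentially as described.

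Two further points need explicit treatment. The transformed payoff $\phi(\mathsf{X}(\alpha(b)))/(\mathsf{T}(\alpha(b))-\mathsf{T}(\alpha(a)))$ depends on the jet of $p$ at $a$ and $b$, not at the running time $t$; to differentiate the first identity with respect to $p_J^{(I)}(t)$ and have this term contribute the constant $\phi(x(b))/(b-a)$ you must either restrict to functions $p$ whose jets vanish at the endpoints or argue that the endpoint jets can be varied independently of the jet at $t$ — your sketch is silent here, although this is exactly what produces the term $\theta_J^I\bigl(f+\phi(x(b))/(b-a)\bigr)$ in the stated current. Likewise, after passing to the Lagrange form you should say how the transformation acts on the auxiliary state $y$ and what its multiplier contributes, since that is what decides whether the Hamiltonian appearing in the current is \eqref{eq:def:Hamiltonian} or the full Hamiltonian of the augmented problem.
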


\begin{remark} 
It is clear that if $\varphi=u$, $\theta_0=\dots=\theta_q=0$ and $F\equiv0$, 
and the transformation group $g$ does not depend on the derivatives 
of the state variables, then Theorem~\ref{thm 2_Noether OC} reduces 
to the classical Noether's second theorem for the basic problem 
of the calculus of variations.
\end{remark}


\section{Proof of main result}
\label{sec:MainRes}

The central ideia of the proof of our main result, 
Noether's second theorem for the higher-order 
variational problem of Herglotz type with time delay, 
is to rewrite problem $\mathbf{(H_\tau^n)}$ as a 
non-delayed optimal control problem. For this, 
we assume, without loss of generality, 
that the initial time is zero ($a=0$)
and the final time $b$ is an integer multiple of $\tau$, that is,
$b=N\tau$ for some $N \in \mathbb{N}$ (see Remark~\ref{rem:red:bN}).
Therefore, we can divide the interval $[a,b]$ into $N$ equal parts.  
Fix $t \in [0,\tau]$ and introduce variables $x^{k;i}$ and $z_j$ 
with $k=0,\dots,n$, $i=0,\dots,N$, and $j=1,\dots, N+1$, as follows:
\begin{equation}
\label{reduction}
\begin{gathered}
x^{k;i}(t)=x^{(k)}(t+(i-1)\tau),
\quad z_j(t)=z(t+(j-1)\tau),\\
\dot{z}_j(t)=L_j(t),\quad
x^{k;N+1}(t)=0,
\quad \dot{z}_{N+1}(t)=L_{N+1}=0
\end{gathered}
\end{equation}
with
$$
L_j(t):=L\left(t+(j-1)\tau,x^{0;j}(t),\dots,x^{n;j}(t),
x^{0;j-1}(t),\dots,x^{n;j-1}(t),z_j(t)\right).
$$
Note that the index $k$ is related to the order
of the derivative of $x$, $i$ is related to the $i$th subinterval 
of $[-\tau,N\tau]$, and $j$ is related to the $j$th subinterval
of $[0,(N+1)\tau]$. Consequently, the higher-order problem 
of Herglotz with time delay $\mathbf{(H_\tau^n)}$ can be written 
as a first-order optimal control problem without time delay as follows:
\begin{equation}
\label{Problem:OC:non:delayed:form}
\begin{gathered}
z_N(\tau)\longrightarrow \mathrm{extr},\quad \text{subject to }\\
\begin{cases}
\dot{x}^{k;i}(t)=x^{k+1;i}(t),  \\
x^{k;N+1}(t)=0,\\
\dot{z}_j(t)=L_j(t), \\
\dot{z}_{N+1}(t)=L_{N+1}(t)=0
\end{cases}\\
\text{ for all } t \in[0,\tau] \text{ and with the initial conditions}\\
x^{k;0}(0)=\mu^{(k)}(-\tau),\quad
x^{k;i}(0)=x^{k;i-1}(\tau),\\
z_1(0)=\gamma,\quad \gamma \in \mathbb{R},
\quad z_j(0)=z_{j-1}(\tau)
\end{gathered}	
\end{equation}
for $k=0, \dots, n-1$, $i=0,\dots, N$ and $j=1,\dots, N$.
In this form, we look to $x^{k;i}$ and $z_j$ as state variables
and to $u_i:=x^{n;i}$ as the control variables.

\begin{remark}
\label{rem:red:bN}
In our previous reduction, we considered the simplest case where $b=N\tau$.
If $b$ is not an integer multiple of $\tau$, then there is an integer $N$
such that $(N-1)\tau<b<N\tau$. In that case, the only modification required
in the change of variables indicated in \eqref{reduction} is to consider the 
variables $x^{k;N}$, $k=0, \dots, n$, and $\dot{z}_N$ as defined in \eqref{reduction}
for $t \in [0,b-(N-1)\tau]$ and zero for $t\in ]b-(N-1)\tau, \tau]$. Note that with this
minor change, the function to be extremized remains the same and, therefore,  
we can consider that $b=N\tau$.
\end{remark}

\begin{remark}[Semi-invariance of problem $\mathbf{(H_\tau^n)}$ under a group of symmetries]
If there is a $C^1$ transformation group
\begin{equation}
\label{eq:g}
\begin{gathered}
g:[a,b]\times\mathbb{R}^{2m(n+1)+1}\times\mathbb{R}^{d (q+1)}
\rightarrow\mathbb{R}\times\mathbb{R}^m\times \mathbb{R},\\
g(\alpha(t))=\left(\mathsf{T}(\alpha(t)),\mathsf{X}(\alpha(t)),\mathsf{Z}(\alpha(t))\right),
\end{gathered}
\end{equation}
where $\alpha(t)$ stands for
\begin{equation*}
\left(t,x(t),\dot{x}(t),\dots,x^{(n)}(t),x(t-\tau),\dot{x}(t-\tau),
\dots,x^{(n)}(t-\tau),z(t),p(t),\dot{p}(t),\dots,p^{(q)}(t)\right),
\end{equation*}
which for $p(t)=\dot{p}(t)=\cdots=p^{(q)}(t)=0$ coincides 
with the identity transformation for all $(t,x,z)\in[a-\tau,b]
\times \mathbb{R}^m\times \mathbb{R}$, and such that problem 
$\mathbf{(H_\tau^n)}$ satisfies the two equations
\begin{equation}
\label{invariance H_tau^n EQ1}
\frac{z(b)}{b-a} + \frac{d}{dt}F(\alpha(t))
= \frac{Z(\alpha(b))}{T(\alpha(b))-T(\alpha(a))}\,\frac{d}{dt}\mathsf{T}(\alpha(t))
\end{equation}
and
\begin{equation}
\label{invariance H_tau^n EQ2}
\frac{d}{dt}\mathsf{Z}(\alpha(t))=L(g(\alpha(t)))\frac{d}{dt}\mathsf{T}(\alpha(t))
\end{equation}
for some function $F$ of class $C^1$, where
\begin{equation*}
\begin{split}
\frac{d}{d\mathsf{T}}\mathsf{X}(\alpha(t))
&=\frac{\frac{d}{dt}\mathsf{X}(\alpha(t))}{
\frac{d}{dt}\mathsf{T}(\alpha(t))} \text{ and }
\frac{d^k}{d\mathsf{T}^k}\mathsf{X}(\alpha(t))
=\frac{\frac{d}{dt}\left(\frac{d^{k-1}}{d\mathsf{T}^{k-1}}
\mathsf{X}(\alpha(t))\right)}{\frac{d}{dt}\mathsf{T}(\alpha(t))},
\end{split}
\end{equation*}
$k=2,\ldots,n$, then problem $\mathbf{(H_\tau^n)}$ is semi-invariant 
under a group of symmetries as in Definition~\ref{DEF inv symm}.
\end{remark}

We are now in a position to formulate and prove our main result.

\begin{theorem}[Noether's second  theorem for problem $\mathbf{(H_\tau^n)}$]
\label{thm 2_Noether}
If problem $\mathbf{(H_\tau^n)}$ is semi-invariant under a group 
of symmetries \eqref{eq:g}, that is, if  
\eqref{invariance H_tau^n EQ1}--\eqref{invariance H_tau^n EQ2}
holds, then there are $d(q+1)$ Noether currents of the form
\begin{multline*}
\left.\frac{\partial F(\alpha(t))}{\partial p_J^{(I)}}\right|_0+\theta_J^I\frac{z(b)}{b-a}\\
+ \sum_{k=1}^{n}\phi_k(t)\cdot \left. \frac{\partial }{\partial p_J^{(I)}}\left(\frac{d^{k-1}}{d\mathsf{T}^{k-1}}\mathsf{X}(\alpha(t))\right)\right|_0
+\psi_z(t)\cdot \left. \frac{\partial \mathsf{Z}(\alpha(t))}{\partial p_J^{(I)}}\right|_0\\
\left. -H(t,x(t),\dot{x}(t),\dots, x^{(n)}(t),z(t),\phi_1(t),
\dots,\phi_n(t),\psi_z(t))\frac{\partial \mathsf{T}(\alpha(t))}{\partial p_J^{(I)}}\right|_0,
\end{multline*}
$t \in [a,b]$, for $I=0,\dots,q,$ $J=1,\dots,d$, and $\theta^I_J \in \mathbb{R}^d$, 
where $\psi_z, \phi_k$ and $H$ are defined, respectively, in \eqref{psi_z}--\eqref{Hamilt H-O delay} 
and $\left.(*)\right|_0$ stands for $\left.(*)\right|_{p(t)=\dot{p}(t)=\dots=p^{(q)}(t)=0}$.
\end{theorem}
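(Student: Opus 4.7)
The plan is to reduce problem $\mathbf{(H_\tau^n)}$ to a first-order optimal control problem without time delay via the change of variables (\ref{reduction}) and (\ref{Problem:OC:non:delayed:form}), and then invoke Theorem~\ref{thm 2_Noether OC} for the reduced problem. This strategy mirrors the one already used in \cite{MyArt05} to obtain the Euler--Lagrange equations, the transversality conditions, and the explicit formulas (\ref{psi_z})--(\ref{Hamilt H-O delay}) for the multipliers and the Hamiltonian associated with $\mathbf{(H_\tau^n)}$.

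First I would carry out the reduction: introduce the state variables $x^{k;i}(t) = x^{(k)}(t+(i-1)\tau)$ and $z_j(t) = z(t+(j-1)\tau)$ together with the controls $u_i := x^{n;i}$, so that $\mathbf{(H_\tau^n)}$ becomes an instance of the Bolza problem (\ref{problem P}) on $[0,\tau]$, with the payoff term $\phi$ extracting $z_N(\tau)$. Next I would show that the semi-invariance hypothesis (\ref{invariance H_tau^n EQ1})--(\ref{invariance H_tau^n EQ2}) for the original higher-order delayed problem lifts to semi-invariance of the reduced optimal control problem in the sense of Definition~\ref{DEF inv symm}. The single transformation group $g$ in (\ref{eq:g}) induces a transformation group on the tuple $(x^{k;i},z_j,u_i)$ by composing $\mathsf{X}$, $\mathsf{Z}$ with the shifts $t \mapsto t+(i-1)\tau$; the key identity is that the iterated quotient $\frac{d^{k-1}}{d\mathsf{T}^{k-1}}\mathsf{X}(\alpha(t))$ is precisely the component of the induced group acting on the higher-order state variable that replaces $x^{(k-1)}$. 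With this correspondence in place, (\ref{invariance H_tau^n EQ1})--(\ref{invariance H_tau^n EQ2}) become exactly the two defining identities of Definition~\ref{DEF inv symm}.

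Then I would apply Theorem~\ref{thm 2_Noether OC} to the reduced problem. This delivers $d(q+1)$ Noether currents expressed in terms of the reduced-problem multipliers and its Hamiltonian. To translate the currents back to the original variables, I would invoke the identifications obtained in \cite{MyArt05}: the multiplier associated with $z$ is given by $\psi_z$ in (\ref{psi_z}), the multipliers associated with the derivative variables $x^{(k)}$ are the $\phi_k$ in (\ref{phi_k}), and the Hamiltonian of $\mathbf{(H_\tau^n)}$ is $H$ in (\ref{Hamilt H-O delay}). Grouping the contributions coming from each reduced state $x^{k;i}$ produces the summation $\sum_{k=1}^{n}\phi_k(t)\cdot\frac{\partial}{\partial p_J^{(I)}}\!\left(\frac{d^{k-1}}{d\mathsf{T}^{k-1}}\mathsf{X}(\alpha(t))\right)\!\big|_0$ in the statement, while the $z_j$ components collapse into the term $\psi_z(t)\cdot \frac{\partial \mathsf{Z}(\alpha(t))}{\partial p_J^{(I)}}|_0$ and into the payoff-like contribution $\theta_J^I\,z(b)/(b-a)$.

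The main obstacle is the bookkeeping of step two: one must verify that the single gauge symmetry of the delayed higher-order problem induces a consistent gauge symmetry acting simultaneously on \emph{all} reduced state variables $(x^{k;i},z_j)$ and controls $u_i$, and that the invariance identity for the OCP matches (\ref{invariance H_tau^n EQ1})--(\ref{invariance H_tau^n EQ2}) when summed over the $N$ subintervals. The technical point here is the identification $\frac{d^{k-1}}{d\mathsf{T}^{k-1}}\mathsf{X}$ with the correct infinitesimal generator acting on $x^{(k-1)}$, which hinges on the chain-rule identities included in the semi-invariance remark. Once this correspondence is verified, the remainder of the proof is a routine rearrangement of the current produced by Theorem~\ref{thm 2_Noether OC} using the explicit formulas (\ref{psi_z})--(\ref{Hamilt H-O delay}).
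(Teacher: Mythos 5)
Your proposal is correct and follows essentially the same route as the paper's own proof: rewrite $\mathbf{(H_\tau^n)}$ as the non-delayed optimal control problem \eqref{Problem:OC:non:delayed:form}, lift the semi-invariance \eqref{invariance H_tau^n EQ1}--\eqref{invariance H_tau^n EQ2} to the reduced problem via the shifted transformations $\mathsf{X}_{k;i}$, $\mathsf{T}_i$, $\mathsf{Z}_j$ built from the iterated derivatives $\frac{d^{k}}{d\mathsf{T}^{k}}\mathsf{X}$, apply Theorem~\ref{thm 2_Noether OC}, and return to the original variables using the identifications \eqref{psi_z}--\eqref{Hamilt H-O delay}. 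The bookkeeping point you flag (the rescaling of $F$ and of the payoff term over the $N$ subintervals) is exactly how the paper handles it, through $\tilde{F}=N\cdot F$ and $z_N(\tau)/\tau$ in its equation \eqref{inv eq1}.
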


\begin{proof}
In order to prove the result, we start by considering problem 
$\mathbf{(H_\tau^n)}$ in its optimal control and non-delayed 
form \eqref{Problem:OC:non:delayed:form}. First, we prove that 
if $\mathbf{(H_\tau^n)}$ is semi-invariant under a group of symmetries, 
that is, if there exists a $C^1$ transformation group \eqref{eq:g} 
satisfying \eqref{invariance H_tau^n EQ1}--\eqref{invariance H_tau^n EQ2},
then the non-delayed optimal control problem 
\eqref{Problem:OC:non:delayed:form} is invariant in the sense 
of Definition~\ref{DEF inv symm}. Observe that 
\eqref{invariance H_tau^n EQ1} is equivalent to
\begin{equation}
\label{inv eq1}
\frac{z_N(\tau)}{\tau}+\frac{d}{dt} \tilde{F}(\alpha(t))
= \frac{Z_N(\alpha(\tau))}{T\left(\alpha(\tau)\right)}\frac{d}{dt}\mathsf{T}(\alpha(t)),
\end{equation}
where $\tilde{F}$ is defined for all 
$t \in [0,\tau]$ by $\tilde{F}(\alpha)(t):=N \cdot F(\alpha)(t)$.
Now, defining 
\begin{equation*}
\begin{split}
\mathsf{X}_{k;i}(\alpha(t))&:=\frac{d^k }{d\mathsf{T}^k}\mathsf{X}(\alpha(t+(i-1)\tau)),\\
\mathsf{T}_i(\alpha(t))&:=\mathsf{T}(\alpha(t+(i-1)\tau)),\\
\mathsf{Z}_j(\alpha(t))&:=\mathsf{Z}(\alpha(t+(j-1)\tau))
\end{split}
\end{equation*}
for fixed $t \in [0, \tau]$, we have
\begin{equation}
\label{inv eq 2.1}
\frac{d}{dt}\mathsf{X}_{k;i}(\alpha(t))
=\mathsf{X}_{k+1;i}(\alpha(t))\frac{d}{dt}\mathsf{T}_{i}(\alpha(t))
\end{equation}
and
\begin{equation}
\label{inv eq 2.2}
\frac{d }{dt}\mathsf{Z}_j(\alpha(t))
= L_j\left(g(\alpha(t))\right) \frac{d}{dt}\mathsf{T}_j(\alpha(t)),
\end{equation}
for $k=0,\dots, n-1$, $i=0,\dots N$, and $j=1,\dots,N$.
From \eqref{inv eq1}--\eqref{inv eq 2.2}, we conclude 
that the non-delayed optimal control problem \eqref{Problem:OC:non:delayed:form} 
is semi-invariant in the sense of Definition~\ref{DEF inv symm}. 
This kind of semi-invariance is the required condition 
for application of the second Noether 
theorem for optimal control (Theorem~\ref{thm 2_Noether OC}), 
which asserts the existence of $d(q+1)$ Noether currents of the form
\begin{equation*}
\begin{split}
\left.\frac{\partial F(\alpha(t))}{\partial p_J^{(I)}}\right|_0
+\theta_J^I\frac{z_N(\tau)}{\tau}
+\sum_{k=1}^{n}\sum_{i=0}^{N}\phi_{k;i}(t)
\cdot \left. \frac{\partial \mathsf{X}_{k-1;i}(\alpha(t))}{\partial p_J^{(I)}}\right|_0
+\sum_{j=1}^{N}\psi_j(t)\cdot \left. \frac{\partial \mathsf{Z}_j(\alpha(t))}{\partial p_J^{(I)}}\right|_0\\
- \left[ \sum_{k=1}^{n}\sum_{i=0}^{N}\phi_{k;i}(t)\cdot x^{k;i}(t)
+\sum_{j=1}^{N} \psi_j(t) L_j(t)\right]
\left. \frac{\partial \mathsf{T}(\alpha(t))}{\partial p_J^{(I)}}\right|_0,
\end{split}
\end{equation*}
$t \in [0,\tau]$, for $I=0,\dots,q,$ $J=1,\dots,d$, where 
$\phi_{k;i}$ and $\psi_j$ are defined from \eqref{psi_z}--\eqref{phi_k}:
\begin{equation*}
\phi_{k;i}(t)=\phi_k(t+(i-1)\tau) \text{ and } \psi_j(t)=\psi_z(t+(i-1)\tau),
\end{equation*}
for $i=0,\dots,N$ and $j=1,\dots, N$. Finally, we rewrite the result 
in the original variables, obtaining that there are $d(q+1)$ Noether currents of the form
\begin{multline*}
\left.\frac{\partial F(\alpha(t))}{\partial p_J^{(I)}}\right|_0+\theta_J^I\frac{z(b)}{b-a}
+\sum_{k=1}^{n}\phi_k(t)\cdot \left. \frac{\partial \mathsf{X}_k(\alpha(t))}{\partial p_J^{(I)}}\right|_0
+\psi_z(t)\cdot \left. \frac{\partial \mathsf{Z}(\alpha(t))}{\partial p_J^{(I)}}\right|_0\\
-\left.	H(t,x(t),\dot{x}(t),\dots, x^{(n)}(t),z(t),\phi_1(t),
\dots,\phi_n(t),\psi_z(t))\frac{\partial \mathsf{T}(\alpha(t))}{\partial p_J^{(I)}}\right|_0.
\end{multline*}
This concludes the proof.
\end{proof}

Our result is new even for first-order generalized variational problems.

\begin{corollary}
\label{coroll:n=1}
If the first-order problem of Herglotz with time delay
\begin{equation*}
\begin{gathered}
z(b)\longrightarrow \mathrm{extr},\\
\dot{z}(t)=L\left(t,x(t),\dot{x}(t),x(t-\tau),\dot{x}(t-\tau),z(t)\right),
\quad t \in [a,b], \\
z(a)=\gamma \in \mathbb{R}, \quad x(t)=\mu(t), \quad t \in [a-\tau,a],
\end{gathered}
\end{equation*}
where $\mu$ is a given piecewise initial function, is semi-invariant, 
then there exist $d(q+1)$ Noether currents of the form
\begin{multline*}
\left.\frac{\partial F(\alpha(t))}{\partial p_J^{(I)}}\right|_0+\theta_J^I\frac{z(b)}{b-a}
+\phi_1(t)\cdot \left. \frac{\partial \mathsf{X}(\alpha(t))}{\partial p_J^{(I)}}\right|_0
+\psi_z(t)\cdot \left. \frac{\partial \mathsf{Z}(\alpha(t))}{\partial p_J^{(I)}}\right|_0\\
-\left.	\left[\phi_1(t)\dot{x}(t)+\psi_z(t)L[x;z]_\tau^1(t)\right]
\frac{\partial \mathsf{T}(\alpha(t))}{\partial p_J^{(I)}}\right|_0,
\end{multline*}
$t \in [a,b]$, for $I=0,\dots,q,$ $J=1,\dots,d$, where $\phi_1$ 
is given by \eqref{phi_k} and $\psi_z$ by \eqref{psi_z}.
\end{corollary}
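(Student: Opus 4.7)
The plan is to derive Corollary~\ref{coroll:n=1} as a direct specialization of Theorem~\ref{thm 2_Noether} to the case $n=1$, rather than redoing the reduction-to-optimal-control machinery from scratch. Since the hypothesis of semi-invariance in the corollary is exactly the $n=1$ instance of \eqref{invariance H_tau^n EQ1}--\eqref{invariance H_tau^n EQ2}, Theorem~\ref{thm 2_Noether} applies and yields $d(q+1)$ Noether currents; the work is just to read off what each general expression becomes when $n=1$.

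First I would specialize the sum $\sum_{k=1}^{n}\phi_k(t)\cdot \partial(\tfrac{d^{k-1}}{d\mathsf{T}^{k-1}}\mathsf{X}(\alpha(t)))/\partial p_J^{(I)}$. For $n=1$ this collapses to the single term $k=1$, for which $\tfrac{d^{0}}{d\mathsf{T}^{0}}\mathsf{X}(\alpha(t)) = \mathsf{X}(\alpha(t))$, so the contribution is simply
\[
\phi_1(t)\cdot \left.\frac{\partial \mathsf{X}(\alpha(t))}{\partial p_J^{(I)}}\right|_0,
\]
with $\phi_1$ still given by \eqref{phi_k}. The terms involving $F$, $\mathsf{Z}$ and $\mathsf{T}$ are already in the correct form and do not require specialization.

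Next I would specialize the Hamiltonian \eqref{Hamilt H-O delay}. For $n=1$ one has
\[
H(t,x(t),\dot{x}(t),z(t),\phi_1(t),\psi_z(t))
= \phi_1(t)\cdot \dot{x}(t) + \psi_z(t)\, L[x;z]_\tau^1(t),
\]
which is precisely the bracketed expression appearing in the statement of the corollary. Substituting this together with the specialized $\mathsf{X}$-term into the conclusion of Theorem~\ref{thm 2_Noether} produces exactly the claimed family of Noether currents.

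There is no real obstacle here since everything is a transparent substitution; the only point requiring a moment of care is to check that the $k=1$ value of $\tfrac{d^{k-1}}{d\mathsf{T}^{k-1}}\mathsf{X}$ is indeed $\mathsf{X}$ (trivially true) and that the expression for $\phi_1(t)$ given by \eqref{phi_k} in the $n=1$ case matches the multiplier produced by Pontryagin's principle applied to the reduced first-order delayed problem, which was already established in the derivation preceding Theorem~\ref{thm:E-L}. With those identifications in hand, the corollary follows immediately as a special case, and no separate argument is needed.
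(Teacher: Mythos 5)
Your proposal is correct and matches the paper's approach: the paper proves this corollary simply by invoking Theorem~\ref{thm 2_Noether} with $n=1$, and your detailed check that the $k$-sum collapses to the $\mathsf{X}$-term and that the Hamiltonian \eqref{Hamilt H-O delay} reduces to $\phi_1(t)\cdot\dot{x}(t)+\psi_z(t)L[x;z]_\tau^1(t)$ is exactly the (implicit) content of that specialization.
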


\begin{proof}
Consider Theorem~\ref{thm 2_Noether} with $n=1$.
\end{proof}

As a corollary of Corollary~\ref{coroll:n=1}, we obtain a new result 
for delayed classical problems of the Calculus of Variations.

\begin{corollary}
\label{coroll:2}
If the first-order variational problem with time delay 
\begin{equation*}
\int_a^b L(t,x(t),\dot{x}(t),x(t-\tau),\dot{x}(t-\tau))dt \longrightarrow \mathrm{extr},
\end{equation*}
with $x(t)=\mu(t)$, $t \in [a-\tau,a]$, for a given piecewise initial function $\mu$, 
is semi-invariant, then there exists $d(q+1)$ Noether currents of the form
\begin{multline*}
\left.\frac{\partial F(\alpha(t))}{\partial p_J^{(I)}}\right|_0
+\phi_1(t)\cdot \left. \frac{\partial \mathsf{X}(\alpha(t))}{\partial p_J^{(I)}}\right|_0
+\theta^{I}_{J}\frac{z(b)}{b-a}\\
-\left.	\Big[\phi_1(t)\dot{x}(t)+L\left(t,x(t),\dot{x}(t),x(t-\tau),\dot{x}(t-\tau)\right)\Big]
\frac{\partial \mathsf{T}(\alpha(t))}{\partial p_J^{(I)}}\right|_0,
\end{multline*}
$t \in [a,b]$, for $I=0,\dots,q,$ $J=1,\dots,d$, 
where $\phi_1$ is given by \eqref{phi_k}.
\end{corollary}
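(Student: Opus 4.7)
The plan is to derive Corollary~\ref{coroll:2} as a direct specialization of Corollary~\ref{coroll:n=1}, by embedding the classical first-order variational problem with time delay into the Herglotz framework in such a way that the Lagrangian becomes independent of the new variable $z$.

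First, I would introduce an auxiliary variable $z$ defined by
\begin{equation*}
\dot{z}(t) = L(t,x(t),\dot{x}(t),x(t-\tau),\dot{x}(t-\tau)), \quad z(a)=0,
\end{equation*}
so that $z(b) = \int_a^b L\,dt$. Extremizing the classical integral functional is then equivalent to extremizing $z(b)$, placing the problem in the form covered by Corollary~\ref{coroll:n=1}, with the crucial feature that the Lagrangian does not depend on $z$.

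Next, I would translate the semi-invariance hypothesis. Since the classical transformation group acts only on $(t,x)$, I would lift it trivially to the Herglotz setting by taking $\mathsf{Z}(\alpha(t)) := z(t)$ (so $\partial \mathsf{Z}/\partial p_J^{(I)}|_0 = 0$), and verify that the classical semi-invariance identity $L + \tfrac{d}{dt}F = L(g(\alpha))\,\tfrac{d}{dt}\mathsf{T}$ matches exactly condition \eqref{invariance H_tau^n EQ2}, while \eqref{invariance H_tau^n EQ1} reduces to a consistency condition on $F$ and $\mathsf{T}$ compatible with the trivial lift. The main obstacle I anticipate is precisely this matching step: making sure both Herglotz invariance equations \eqref{invariance H_tau^n EQ1}--\eqref{invariance H_tau^n EQ2} are genuinely satisfied by the lifted data, rather than only the more familiar one involving $L$.

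Finally, I would evaluate the quantities appearing in Corollary~\ref{coroll:n=1}. Because $\partial L/\partial z \equiv 0$, formula \eqref{psi_z} gives $\psi_z(t)\equiv 1$ on $[a,b]$; combined with $\partial \mathsf{Z}/\partial p_J^{(I)}|_0 = 0$, the Noether current from Corollary~\ref{coroll:n=1} collapses to
\begin{equation*}
\left.\frac{\partial F(\alpha(t))}{\partial p_J^{(I)}}\right|_0
+\phi_1(t)\cdot\left.\frac{\partial \mathsf{X}(\alpha(t))}{\partial p_J^{(I)}}\right|_0
+\theta_J^I\frac{z(b)}{b-a}
-\bigl[\phi_1(t)\dot{x}(t)+L\bigr]\left.\frac{\partial \mathsf{T}(\alpha(t))}{\partial p_J^{(I)}}\right|_0,
\end{equation*}
which is exactly the formula in the statement. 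A short remark that $z(b)/(b-a)$ is precisely the time-average of the classical Lagrangian along the trajectory closes the interpretation, and the count of $d(q+1)$ currents is inherited directly from Corollary~\ref{coroll:n=1}.
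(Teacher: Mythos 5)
Your proposal is correct and follows essentially the same route as the paper, whose entire proof is the one line ``Consider Corollary~\ref{coroll:n=1} with $L$ not depending on $z$.'' You simply make explicit the details the paper leaves implicit: the auxiliary variable $z$ with $z(b)=\int_a^b L\,dt$, the trivial lift $\mathsf{Z}=z$ killing the $\partial\mathsf{Z}/\partial p_J^{(I)}$ term, and $\psi_z\equiv 1$ from \eqref{psi_z} since $\partial L/\partial z\equiv 0$.
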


\begin{proof}
Consider Corollary~\ref{coroll:n=1} with $L$ not depending on $z$.
\end{proof}


\section{Example}
\label{sec:ex}

In order to illustrate our results, we present a simple example 
that cannot be covered using available results in the literature. 
Consider an arbitrary interval $[a,b]$ and let $\tau \in \mathbb{R}$ 
be a nonnegative real number such that $\tau <b-a$. We address 
the following problem with $m=d=q=1$:
\begin{equation}
\label{eq:prb:ex}
\begin{gathered}
z(b)\rightarrow\mathrm{extr},\\
\dot{z}(t)=x(t-\tau) z(t), \quad t\in[a,b], \\
\text{subject to } z(a)=\gamma, \quad x(t) = \mu(t), \quad t\in[a-\tau,a],
\end{gathered}
\end{equation}
where $\mu \in PC^1([a-\tau,a];\mathbb{R})$ is a given initial function.
Let $p$ be a $C^1([a,b];\mathbb{R})$ function and consider the $C^1$ group of symmetries
\begin{equation*}
g(\alpha(t))=\left(t+p(t),\frac{x(t-\tau)}{1+\dot{p}(t)},z(t)\right),
\end{equation*}
that is, 
\begin{equation*}
\begin{aligned}
&\mathsf{T}(\alpha(t))=T\left(t,p(t)\right) = t + p(t),\\
&\mathsf{X}(\alpha(t))=X\left(x(t-\tau),\dot{p}(t)\right) = \frac{x(t-\tau)}{1+\dot{p}(t)},\\ 
&\mathsf{Z}(\alpha(t))=Z\left(z(t)\right) = z(t),
\end{aligned}
\end{equation*}
which for $p(t) = \dot{p}(t) = 0$, $t\in [a,b]$, reduce to the identity transformations.
Observe that the problem under study is semi-invariant. Indeed, 
\eqref{invariance H_tau^n EQ1} is verified with 
$$
F(t)=\frac{z(b)}{b-a+p(b)-p(a)} \left(t + p(t)\right) -\frac{z(b)}{b-a} t
$$
and \eqref{invariance H_tau^n EQ2} is also valid because
\begin{equation*}
\frac{d}{dt}\mathsf{Z}(\alpha(t))=\dot{z}(t)
=\frac{x(t-\tau)}{1+\dot{p}(t)} z(t) (1+\dot{p}(t)) 
=L(g(\alpha(t)))\frac{d}{dt}\mathsf{T}(\alpha(t)).
\end{equation*}
From Theorem~\ref{thm 2_Noether}, we have that there are two Noether currents of the form
\begin{multline*}
\left.\frac{\partial F(\alpha(t))}{\partial p^{(I)}}\right|_0+\theta^I\frac{z(b)}{b-a}
+\phi_1(t)\cdot \left. \frac{\partial \mathsf{X}(\alpha(t))}{\partial p^{(I)}}\right|_0
+\psi_z(t)\cdot \left. \frac{\partial \mathsf{Z}(\alpha(t))}{\partial p^{(I)}}\right|_0\\
-\left.	\left[\phi_1(t)\dot{x}(t)+\psi_z(t)L[x;z]_\tau^1(t)\right]
\frac{\partial \mathsf{T}(\alpha(t))}{\partial p^{(I)}}\right|_0, \quad I=0,1.
\end{multline*}
Noting that $\phi_1(t)=0$ and $\psi_z(t)=e^{\int_t^b x(s-\tau) ds}$, $t\in [a,b]$, 
the second Noether current reduces to a constant 
while the first gives a nontrivial conclusion: it asserts that
\begin{equation*}
x(t-\tau) z(t) e^{\int_t^b x(s-\tau) ds} 
\end{equation*}
is constant along the extremals of problem \eqref{eq:prb:ex}.


\section{Concluding remarks}
\label{sec:conc}

We have deduced new necessary conditions for higher-order generalized 
variational problems with time delay that are semi-invariant under 
a group of transformations that depends on arbitrary functions. 
The conditions are potentially useful, because for many variational problems, 
the Euler--Lagrange equations and transversality conditions are not enough 
to obtain an explicit solution. Our main result is new even 
for classical delayed variational problems.


\section*{Acknowledgements}

This research is part of first author's Ph.D. project,
which is carried out at University of Aveiro.
It was partially supported by Portuguese funds through
the Center for Research and Development in Mathematics
and Applications (CIDMA) and the Portuguese Foundation
for Science and Technology (FCT), within project 
UID/MAT/04106/2013. The authors are grateful to an 
anonymous Reviewer for several comments and suggestions, 
which showed them where to clarify the paper 
and how to improve its quality.



\medskip
Received September 2016; revised March 2017.
\medskip



\begin{thebibliography}{99}

\bibitem{Ligia+Luis+Natalia2016} (MR3476191)
\newblock L. Abrunheiro, L. Machado and N. Martins, 
\newblock The Herglotz variational problem on spheres and its optimal control approach, 
\newblock \emph{J. Math. Anal.} {\bf 7} (2016), no.~1, 12--22. 

\bibitem{Almeida+Malinowska2014} (MR3274999) [10.3934/dcdsb.2014.19.2367]
\newblock R. Almeida and A. B. Malinowska, 
\newblock Fractional variational principle of Herglotz, 
\newblock \emph{Discrete Contin. Dyn. Syst. Ser. B} {\bf 19} (2014), no.~8, 2367--2381. 

\bibitem{MR3540628} (MR3540628) [10.1142/S0218271816300214]
\newblock M. Ba{\~n}ados and I. Reyes, 
\newblock A short review on Noether's theorems, gauge symmetries and boundary terms, 
\newblock \emph{Internat. J. Modern Phys. D} {\bf 25} (2016), no.~10, 1630021, 74~pp. 

\bibitem{MyID:304} (MR3259239) [10.1155/2014/250419]
\newblock M. Benharrat and D. F. M. Torres, 
\newblock Optimal control with time delays via the penalty method, 
\newblock \emph{Math. Probl. Eng.} {\bf 2014} (2014), Art. ID 250419, 9~pp. 
\newblock \arXiv{1407.5168}

\bibitem{MR3177826} (MR3177826) [10.1051/mmnp/20149102] 
\newblock X. Dupuis, 
\newblock Optimal control of leukemic cell population dynamics, 
\newblock \emph{Math. Model. Nat. Phenom.} {\bf 9} (2014), no.~1, 4--26. 

\bibitem{MR2970905} (MR2970905) [10.3934/naco.2012.2.619]
\newblock G. S. F. Frederico and D. F. M. Torres, 
\newblock Noether's symmetry theorem for variational and optimal control problems with time delay, 
\newblock \emph{Numer. Algebra Control Optim.} {\bf 2} (2012), no.~3, 619--630. 
\newblock \arXiv{1203.3656}

\bibitem{MyID:253} (MR3040924)
\newblock G. S. F. Frederico and D. F. M. Torres, 
\newblock A nondifferentiable quantum variational embedding in presence of time delays, 
\newblock \emph{Int. J. Difference Equ.} {\bf 8} (2013), no.~1, 49--62. 
\newblock \arXiv{1211.4391}

\bibitem{MR3413358} (MR3413358) [10.1093/bjps/axt046] 
\newblock S. Friederich, 
\newblock Symmetry, empirical equivalence, and identity,
\newblock \emph{British J. Philos. Sci.} {\bf 66} (2015), no.~3, 537--559.

\bibitem{Georgieva2002} (MR1962221)
\newblock B. Georgieva and R. Guenther,
\newblock First Noether-type theorem for the generalized variational principle of Herglotz,
\newblock \emph{Topol. Methods Nonlinear Anal.} {\bf 20} (2002), no.~2, 261--273.

\bibitem{Georgieva2005} (MR2197762)
\newblock B. Georgieva and R. Guenther,
\newblock Second Noether-type theorem for the generalized variational principle of Herglotz,
\newblock \emph{Topol. Methods Nonlinear Anal.} {\bf 26} (2005), no.~2, 307--314.

\bibitem{Georgieva2003} (MR2003940) [10.1063/1.1597419]
\newblock B. Georgieva, R. Guenther and T. Bodurov,
\newblock Generalized variational principle of Herglotz for several independent variables,
\newblock \emph{J. Math. Phys.} {\bf 44} (2003), no.~9, 3911--3927.

\bibitem{MR3124697} (MR3124697) [10.3934/jimo.2014.10.413] 
\newblock L. G\"ollmann and H. Maurer, 
\newblock Theory and applications of optimal control problems with multiple time-delays,
\newblock \emph{J. Ind. Manag. Optim.} {\bf 10} (2014), no.~2, 413--441. 

\bibitem{Guenther1996SIAM} (MR1391230) [10.1137/1038042]
\newblock R. B. Guenther, J. A. Gottsch and D. B. Kramer,
\newblock The Herglotz algorithm for constructing canonical transformations,
\newblock \emph{SIAM Rev.} {\bf 38} (1996), no.~2, 287--293.

\bibitem{Guenther1996}
\newblock R. B. Guenther, C. M. Guenther and J. A. Gottsch,
\newblock \emph{The Herglotz Lectures on Contact Transformations and Hamiltonian Systems},
\newblock Lecture Notes in Nonlinear Analysis, Vol.~1,
Juliusz Schauder Center for Nonlinear Studies,
Nicholas Copernicus University, Tor\'{u}n, 1996.
	
\bibitem{Guinn} (MR0408923) [10.1007/BF00933818]
\newblock T. Guinn,
\newblock Reduction of delayed optimal control problems to nondelayed problems,
\newblock \emph{J. Optimization Theory Appl.} {\bf 18} (1976), no.~3, 371--377.

\bibitem{Herglotz1930}
\newblock G. Herglotz,
\newblock \emph{Ber\"uhrungstransformationen},
\newblock Lectures at the University of G\"ottingen, G\"ottingen, 1930.

\bibitem{MR3531794} (MR3531794) [10.1007/s10957-016-0957-3]  
\newblock S. M. Hoseini and H. R. Marzban, 
\newblock Costate computation by an adaptive pseudospectral method for solving 
optimal control problems with piecewise constant time lag, 
\newblock \emph{J. Optim. Theory Appl.} {\bf 170} (2016), no.~3, 735--755. 

\bibitem{MR2761345} (MR2761345) [10.1007/978-0-387-87868-3]  
\newblock Y. Kosmann-Schwarzbach,  
\newblock \emph{The Noether theorems. Invariance and conservation laws in the twentieth century}. 
\newblock Translated, revised and augmented from the 2006 French edition by B.~E.~Schwarzbach. 
\newblock Sources and Studies in the History of Mathematics and Physical Sciences. 
\newblock Springer, New York, 2011. 

\bibitem{MR3179312} (MR3179312) [10.1177/1077546312442697]
\newblock A. B. Malinowska, 
\newblock On fractional variational problems which admit local transformations, 
\newblock \emph{J. Vib. Control} {\bf 19} (2013), no.~8, 1161--1169. 

\bibitem{malinaNaty} (MR3134175) [10.1155/2013/675127]
\newblock A. B. Malinowska and N. Martins, 
\newblock The second Noether theorem on time scales, 
\newblock \emph{Abstr. Appl. Anal.} {\bf 2013} (2013), Art. ID 675127, 14~pp. 

\bibitem{Agnieszka+Tatiana2016} [10.1080/00036811.2016.1192136] 
\newblock A. B. Malinowska and T. Odzijewicz,
\newblock Second Noether's theorem with time delay,
\newblock \emph{Appl. Anal.}, in press.

\bibitem{Noether1918}
\newblock E. Noether,
\newblock \emph{Invariante Variationsprobleme},
\newblock \emph{Nachr. v. d. Ges. d. Wiss. zu Göttingen} (1918), 235--257.

\bibitem{Pontryagin} (MR0166037)
\newblock L. S. Pontryagin, V. G. Boltyanskii, R. V. Gamkrelidze and E. F. Mishchenko,
\newblock \emph{The mathematical theory of optimal processes},
\newblock Interscience Publishers, John Wiley and Sons Inc, New York, London, 1962.

\bibitem{MyArt01} (MR3286693) [10.1007/s10013-013-0048-9]
\newblock S. P. S. Santos, N. Martins and D. F. M. Torres,
\newblock Higher-order variational problems of Herglotz type,
\newblock \emph{Vietnam J. Math.} {\bf 42} (2014), no.~4, 409--419.
\newblock \arXiv{1309.6518}

\bibitem{MyArt02} (MR3392640) [10.3934/dcds.2015.35.4593]
\newblock S. P. S. Santos, N. Martins and D. F. M. Torres,
\newblock Variational problems of Herglotz type with time delay:
DuBois-Reymond condition and Noether's first theorem,
\newblock \emph{Discrete Contin. Dyn. Syst.} {\bf 35} (2015), no.~9, 4593--4610.
\newblock \arXiv{1501.04873}

\bibitem{MyArt04} (MR3462534) [10.3934/proc.2015.990]
\newblock S. P. S. Santos, N. Martins and D. F. M. Torres, 
\newblock Noether's theorem for higher-order variational problems of Herglotz type, 
\newblock \emph{Discrete Contin. Dyn. Syst.} {\bf 2015} (2015), 
Dynamical systems, differential equations and applications. 
10th AIMS Conference. Suppl., 990--999. 
\newblock \arXiv{1507.05911}

\bibitem{MyArt05}
\newblock S. P. S. Santos, N. Martins and D. F. M. Torres,
\newblock Higher-order variational problems of Herglotz with time delay,
\newblock \emph{Pure and Applied Functional Analysis} \textbf{1} (2016), no.~2, 291--307.
\newblock \arXiv{1603.04034}

\bibitem{MR3467590} (MR3467590) [10.2991/978-94-6239-171-0]
\newblock G. Sardanashvily, 
\newblock \emph{Noether's theorems. Applications in mechanics and field theory}. 
\newblock Atlantis Studies in Variational Geometry, 3. 
\newblock Atlantis Press, Paris, 2016. 

\bibitem{MyID:353} (MR3562914) [10.3934/mbe.2017021]
\newblock C. J. Silva, H. Maurer and D. F. M. Torres,
\newblock Optimal control of a tuberculosis model with state and control delays,
\newblock \emph{Math. Biosci. Eng.} {\bf 14} (2017), no.~1, 321--337.
\newblock \arXiv{1606.08721}

\bibitem{ejc} [10.3166/ejc.8.56-63]
\newblock D. F. M. Torres,
\newblock On the Noether theorem for optimal control,
\newblock \emph{Eur. J. Control} {\bf 8} (2002), no.~1, 56--63.

\bibitem{Torres:ConservLaws2002} (MR1901565) [10.1007/3-540-45606-6_20]
\newblock D. F. M. Torres,
\newblock Conservation laws in optimal control,
\newblock in \emph{Dynamics, bifurcations, and control} (Kloster Irsee, 2001), 287--296,
Lecture Notes in Control and Inform. Sci., 273, Springer, Berlin, 2002.

\bibitem{Torres2003MR1980565} (MR1980565)
\newblock D. F. M. Torres, 
\newblock Gauge symmetries and Noether currents in optimal control, 
\newblock \emph{Appl. Math. E-Notes} {\bf 3} (2003), 49--57. 
\newblock \arXiv{math/0301116}

\bibitem{Torres2004} (MR2040245)
\newblock D. F. M. Torres, 
\newblock Quasi-invariant optimal control problems, 
\newblock \emph{Port. Math.} (N.S.) {\bf 61} (2004), no.~1, 97--114. 
\newblock \arXiv{math/0302264}

\bibitem{cpaa} (MR2098297) [10.3934/cpaa.2004.3.491]
\newblock D. F. M. Torres, 
\newblock Proper extensions of Noether's symmetry theorem 
for nonsmooth extremals of the calculus of variations,
\newblock \emph{Commun. Pure Appl. Anal.} {\bf 3} (2004), no.~3, 491--500.

\end{thebibliography}
\end{document}